\documentclass[11pt]{article}

\usepackage[a4paper,margin=1in]{geometry}
\usepackage{amsmath,amssymb,amsthm,mathtools}
\usepackage[T1]{fontenc}
\usepackage[utf8]{inputenc}

\input{amssym.def}
\input{amssym}

\newcommand{\pa}{\partial}
\newcommand{\hl}{m}
\newcommand{\un}{\underline}
\newcommand{\qq}{q^{-1}}
\DeclareMathOperator{\Tr}{Tr}
\newcommand{\MM}{\mathcal{M}}
\newcommand{\de}{\delta}
\newcommand{\De}{\Delta}
\newcommand{\ot}{\otimes}
\newcommand{\C}{\mathbb{C}}
\DeclareMathOperator{\Sym}{Sym}
\renewcommand{\AA}{\mathcal{A}}
\newcommand{\ov}{\overline}
\newcommand{\al}{\alpha}
\DeclareMathOperator{\Cas}{Cas}
\newcommand{\DD}{\mathcal{D}}
\newcommand{\DDDD}{\mathfrak{D}}
\newcommand{\UU}{U(gl(N)_h)}
\newcommand{\UUU}{U(u(2)_h)}
\newcommand{\tpa}{\widetilde{\pa}}
\DeclareMathOperator{\End}{End}
\newcommand{\hh}{\hbar}
\newcommand{\hatt}{\widehat{\Theta}}
\newcommand{\tatt}{\widetilde{\Theta}}
\DeclareMathOperator{\Mat}{Mat}
\newcommand{\rh}{r_{\hbar}}
\newcommand{\Ah}{\mathcal{B}}
\newcommand{\AAA}{\mathcal{A}}
\newcommand{\parh}{\pa_{\rh}}
\newcommand{\Om}{\Omega}
\newcommand{\SSS}{\mathcal{S}}
\newcommand{\Aqh}{\mathcal{M}_{h}(R)}
\newcommand{\om}{\omega}
\newcommand{\tQ}{\widetilde Q}
\newcommand{\dd}{\mathbf{d}}
\DeclareMathOperator{\Ch}{Ch}
\newcommand{\Dif}{\mathsf{Dif}}

\numberwithin{equation}{section}

\theoremstyle{plain}
\newtheorem{theorem}{Theorem}[section]

\newtheorem{lemma}[theorem]{Lemma}
\newtheorem{proposition}[theorem]{Proposition}
\theoremstyle{definition}

\newtheorem{remark}[theorem]{Remark}

\begin{document}

\title{Characteristic classes on a noncommutative background: a new approach}

\author{\rule{0pt}{7mm} Dimitry Gurevich\thanks{gurevich@ihes.fr}\\
	{\small\it Higher School of Modern Mathematics, Moscow Institute of Physics and Technology,}\\
	{\small\it 9 Institutskiy Pereulok, 141700 Dolgoprudny, Moscow Region, Russian Federation}\\
	\rule{0pt}{7mm} Vladimir Roubtsov\thanks{volodya@univ-angers.fr}\\
	{\small\it 	LAREMA, UMR 6093 du CNRS, D\'epartement de  Math\'ematiques,}\\
	{\small\it Universit\'e d'Angers, 2, boulevard Lavoisier, Angers, France}\\
	\rule{0pt}{7mm} Pavel Saponov\thanks{Pavel.Saponov@ihep.ru}\\
	{\small\it 	National Research University Higher School of Economics,}\\
	{\small\it 20 Myasnitskaya Ulitsa, Moscow 101000, Russian Federation}\\
	{\small\it and}\\
	{\small\it Institute for High Energy Physics, NRC "Kurchatov Institute"}\\
	{\small \it Protvino 142281, Russian Federation}}

\maketitle

\noindent
MSC 2020: 58B34 (primary), 46L87, 6T20 (secondary)

\smallskip

\noindent
{\bf Keywords:} noncommutative differential calculus, quantum partial derivatives, reflection equation algebras, 
Grassmannian connections, Chern classes.

\bigskip

\begin{abstract}
We introduce analogues of partial derivatives with respect to the generators of the enveloping algebras $U(gl(N))$ and their
$q$-deformations, the so-called modified reflection equation algebras. Using these quantum partial derivatives, we introduce
the corresponding differential algebras, define the quantum de Rham operator, and exhibit the Leibniz rule for its action. Our
Leibniz rule differs from its classical analogue and stems from the construction of a quantum double. In the case of the
algebra $U(gl(N))$, we present another form of the Leibniz rule that is more convenient for prolonging the differential calculus
to certain extensions of this algebra.

We then use analogues of the Cayley--Hamilton identity for the generating matrices of $U(gl(N))$ and of reflection equation algebras
to construct projective modules over these algebras. For these projective modules, we introduce Grassmannian connections and
define the corresponding Chern classes.
\end{abstract}

\maketitle

\section{Introduction}\label{sec:introduction}

In this  paper we develop a new way of constructing a differential calculus on some noncommutative (NC) algebras, which
differs substantially from the known approaches but remains close to the usual calculus on the commutative algebras
$\mathrm{Sym}(gl(N))$. Moreover, our differential algebra reduces to the classical one whenever an NC algebra under
consideration is a deformation of its classical commutative counterpart and tends to it. Besides, we compute the
Chern classes of some projective modules over the mentioned NC algebras in the spirit of \cite{Gom}.

 There exist various approaches to a NC differential calculus. One  of them is motivated by the
Hochschild-Kostant-Rosenberg theorem (see \cite{L}). It treats differential forms on an NC algebra in terms of the homology of the
corresponding Hochschild complex, whereas  the role of the de Rham operator is played by the Connes operator $B$. There is
also a cyclic version of this scheme.

Another approach deals with calculus on algebras, related to  quantum $R$-matrices. We  mention only a few representative
papers  \cite{BM, FP, P, WZ, W}, in which the role of the basic NC algebra is played by the so-called RTT algebra (or by its subalgebra)
while the role of the one-sided vector fields is attributed to the corresponding Reflection Equation (RE) algebra. In \cite{GPS2} the RTT
algebra was substituted by another copy of the RE algebra  (denoted as $\MM(R)$ below).

On the algebra $\MM(R)$  and on its modified version one can define analogues of partial derivatives with respect
to the algebra generators and by passing to an appropriate limit one gets analogues of the partial derivatives on the enveloping algebra
$U(gl(N))$. We call all these analogues the Quantum Partial Derivatives (QPD). They are deformations of the usual partial derivatives with
respect to the generators of the commutative algebras $\mathrm{Sym}(gl(N))$. Moreover, the quantum  algebra, generated by the RE
algebra $\MM(R)$ or by its limit $U(gl(N))$ together with the corresponding QPD, is a deformation of the usual Heisenberg-Weyl
algebra of $gl(N)$ type (see \cite{GPS2}).

With the use of the QPD we succeeded in constructing the corresponding differential algebra and an analog of the de Rham operator
$\dd$. It should be emphasized that our operator $\dd$ does \emph{not} satisfy the classical Leibniz rule
\begin{equation}
\dd(a\, b)=\dd(a)\, b+a\, \dd (b).
\label{Le}
\end{equation}
The same feature is characteristic for the QPD  as well.\footnote{All papers devoted to constructing NC differential calculus can be divided in two
families: those preserving the Leibniz rule \eqref{Le} (f.e. \cite{BM})   and those neglecting it (f.e. \cite{FP}). Observe that an attempt to
preserve the classical Leibniz rule while deforming a  commutative algebra $A$ does not allow to get a differential algebra, which could
be a deformation of the differential algebra $\Om(A)$, except trivial examples. }

It should be stressed that the main difference between our approach and that which is traditionally used in papers devoted to  differential calculus  on NC
background (see \cite{DVL} or \cite{La}) consists in the following. We consider the space of differential forms as one-sided module over the basic NC algebra, whereas
the classical Leibniz rule imposes using two-sided module structure.
We convert this module into two-sided only on the last step in order to present the final result (i.e. the Chern classes) in a special form  which is called canonical.

The main objects of our approach are the algebras $U(gl(N))$ and their $q$-deformations. By these we mean the modified RE algebras,
corresponding to skew-invertible Hecke type braidings (called Hecke symmetries). Note, that our considerations are valid for any Hecke
symmetry $R$, but if, in addition, $R$ is a deformation of the usual flip, then the corresponding modified RE algebra is a two-parameter
deformation of the algebra $\mathrm{Sym}(gl(N))$.

All mentioned algebras have two very specific properties. First, they permit introducing analogues of partial derivatives with respect to
the generators. Let us observe  that a possibility to introduce QPD is due to the fact that the role of  ``functional'' algebras is attributed
to the RE algebra $\MM(R)$ or to the enveloping algebra $U(gl(N))$ as a specific limit of $\MM(R)$.

Second, these algebras allow constructing a number of projective modules via a matrix Cayley-Hamilton identity\footnote{This identity
is called \emph{basic}. Besides, there are \emph{higher} Cayley-Hamilton identities constructed in \cite{GS0}. They also give rise
to projective modules.}
\begin{equation}
M^N-a_1\, M^{N-1}+...+(-1)^{N-1} a_{N-1}\, M+(-1)^{N} a_N I=0.
\label{CH0}
\end{equation}
Here $M$ is a matrix composed of generators of $\MM(R)$ or $U(gl(N))$, while the coefficients $a_i$ are central elements in
the corresponding algebra. The symbol $I$ always stands for the identity matrix of an appropriate size.

By introducing the roots $\{\mu_i\}_{1\leq i\leq N}$ of the corresponding Cayley-Hamilton polynomial and assuming them to be pairwise
distinct, we construct a family of idempotents
\begin{equation}
P_i=\frac{\prod_{j\not=i} (M-\mu_j\, I)}{\prod_{j\not=i}(\mu_i-\mu_j)}, \qquad P_iP_j = \delta_{ij}P_i.
\label{Idem}
\end{equation}

Observe that for  the commutative algebra $\mathrm{Sym}(gl(N))$ it is also possible to introduce a  generating matrix $M$
subject to \eqref{CH0}. So, in this case  the eigenvalues $\mu_i$ and  idempotents $P_i$ can be defined in the same way. If we assume
the eigenvalues to be fixed $\mu_i=c_i\in \C$, we get a polynomial algebra restricted on a semisimple orbit 
$\mathcal{O}_\mu\subset gl(N)^*$ with respect to the coadjoint action of the group $GL(N)$. In this case the idempotents $P_i$ define projective modules over this algebra. If the eigenvalues $\mu_i$
are not fixed, we get the projective modules  over the skew-ring of  the extended algebra $\mathrm{Sym}(gl(N))[\mu_i]$.

A similar situation is valid in the NC algebras under consideration. However, in the noncommutative case it is not evident what is a
proper prolongation of the QPD onto the extended algebras since the usual Leibniz rule is not valid. We succeeded in finding a
convenient prolongation of the QPD only for $U(gl(2))$ and for its compact counterpart.

It should be emphasized that according to the famous Serre-Swan approach (see \cite{Se, Sw}), the fibre bundles over 
regular algebraic varieties can be described in terms of projective modules over the corresponding algebra of functions. By using the related
idempotents, it is possible to define the Grassmannian connection on these modules. Observe that we use the term {\em Grassmannian connection} in a large sense without assuming the idempotents under consideration  to meet the property $P^\dag=P$, which is often imposed in this construction (see \cite{La}).

Finally, we apply to these idempotents the definition of the Chern classes going back to in \cite{Gom}. Thus, we get the Chern classes on our projective modules, which are very close to the classical classes and coincides with them in the limit. We need to stress that our projective modules are different from  line bundles over the quantum homogeneous space of invariant elements for the quantum subgroup $SO(2)$ of $SO_q(3)$ of \cite{LaPa} and corresponding idempotents are also different.

We realize this approach on the NC algebras in the following sequence of steps. In Section~\ref{sec:2} we deal with the compact
form $U(u(2))$ of the algebra $U(gl(2), \C)$. We define QPD on these algebras by using the so-called quantum double and
the coalgebraic structure of the subalgebra generated by the QPD. The latter description yields a form of the Leibniz rule for
QPD that is useful for prolonging the QPD to an extension of $U(u(2))$; this prolongation is constructed in
Section~\ref{sec:spectral}. In Section~\ref{sec:differential} we define a differential algebra $\Om(gl(N))$ stemming from
$U(gl(N))$ and introduce the de Rham operator without using the classical Leibniz rule. In Section~\ref{sec:characteristic} we define
an NC version of the characteristic classes on the extended algebra $U(u(2))$. In Section~\ref{sec:generalizations} we discuss
possible generalizations, define the differential algebra and de Rham operator corresponding to $U(gl(N))$ and the RE algebra,
and conclude with the example of a two-parameter quantum hyperboloid.
The ground field is assumed to be $\C$.

\section[Quantum partial derivatives on U(gl(N)h)]{Quantum partial derivatives on $U(gl(N)_h)$}
\label{sec:2}

Let us consider the Lie algebra $gl(N)_h$ where the parameter $h$ is a multiplier in the right hand side of the standard
$gl(N)$ Lie bracket
\begin{equation}
[\hl_i^j\,, \hl_k^l]=h\,(\de_k^j\, \hl_i^l-\de_i^l\, \hl_k^j).
\label{gl-h}
\end{equation}
Here, the family $\{\hl_i^j\}_{1\leq i,j\leq N}$ is the standard   basis composed of  the matrix units provided $h=1$. Due to the parameter $h$ the enveloping algebra $U(gl(N)_h)$
acquires the meaning of a deformation of the commutative algebra $\mathrm{Sym}(gl(N))$.

To define the QPD, we first introduce a commutative associative algebra $\DD$ finitely generated by $N^2$ elements
$\{\partial_i^j\}_{1\le i,j\le N}$:
\[
\partial_i^j\,\partial_k^n = \partial_k^n\,\partial_i^j,\qquad 1\le i,j,k,n\le N.
\]
It is not difficult to verify that the algebra $\DD$ can be given the \emph{bialgebra} structure with the coproduct $\Delta$
and the counit $\varepsilon$ defined on generators by the following relations:
\begin{align}
\De(\pa_i^j) &= \pa_i^j\ot 1_\DD+1_\DD\ot \pa_i^j
+h\sum_{k=1}^N \pa^j_k\ot \pa_i^k, \label{co}\\
\varepsilon(1_\DD) &= 1, \qquad \varepsilon(\partial_i^j)=0. \label{coun}
\end{align}
Here, $1_\DD$ is the unit element of the algebra $\DD$. As usual, the action of coproduct and counit on arbitrary elements of $\DD$
is extended in the natural way, so we get a bi-algebra structure. Note that due to the last term in \eqref{co} the coproduct $\Delta$ is \emph{not}
co-commutative.

Now we define a linear action of the algebra $\DD$ on $U(gl(N)_h)$ which is a representation of $\DD$ and, from the other hand,
is compatible with the algebraic structure of the enveloping algebra\footnote{This means that the action of $\DD$ sends to zero the ideal generated by relations \eqref{gl-h}.}.

We set by definition
\begin{equation}
\partial_i^j(1_{U} ) = 0,\qquad \partial_i^j(\hl_k^l) = \delta_i^l\,\delta_k^j\,1_U,
\label{gen-act}
\end{equation}
where $1_U$ is the unit element of the universal enveloping algebra $U(gl(N)_h)$. Thus, the action of the elements $\pa_i^j$ onto the unit and the generators $m_k^j$ are the same as in the algebra $sym(gl(N))$. 

Whereas their actions on the higher polynomials in the generators $m_k^j$ is defined via the coproduct \eqref{co}. More precisely, we set
\begin{equation}
\pa_i^j(a\,b):= (\pa_i^j)_{(1)}(a)\, (\pa_i^j)_{(2)}(b) = \partial_i^j(a)\,b+ a\,\partial_i^j(b) +h\sum_k\partial_k^j(a)
\partial_i^k(b),
\label{Lei}
\end{equation}
where $\De(\pa_i^j) = (\pa_i^j)_{(1)}\ot (\pa_i^j)_{(2)}$ is the Sweedler's notation for the coproduct \eqref{co}. The unit element
$1_\DD$ acts as the identity operator. The proof that the defined action of the elements $\pa_i^j$ is compatible with the structure of the algebra $U(gl(N)_h)$ is straightforward (see \cite{GPS2} for detail).

The relation \eqref{Lei} is a form of the {\em quantum} Leibniz rule. However, it can be expressed in other forms. Thus, if in the relation
\eqref{Lei} we omit the element $b$, we get the so-called {\em permutation map} $\DD\otimes U(gl(N)_h)\rightarrow U(gl(N)_h)\otimes \DD:$
\begin{equation}
\pa_i^j\ot a \longmapsto (\pa_i^j)_{(1)}(a)\ot (\pa_i^j)_{(2)} 
= \partial_i^j(a)\otimes 1_\DD+a\otimes \partial_i^j
+h\sum_k\partial_k^j(a)\otimes \partial_i^k, 
\label{first}
\end{equation}
$$
1_\DD\otimes a \longmapsto a\otimes 1_\DD, \qquad
\omega\otimes 1_U\longmapsto 1_U\otimes \omega,
$$
where $a\in U(gl(N)_h)$ and $\omega\in\DD$.

Moreover, the permutation map defines a \emph{quantum double} $(\DD,U(gl(N)_h))$. This is the associative unital algebra generated by
$2N^2$ elements $m_i^j\in U(gl(N)_h)$ and $\partial_k^l\in\DD$, subject to the \emph{permutation relations} that follow
from \eqref{first} and \eqref{gen-act}:
\begin{equation}
\pa_i^{\,j}\, m_k^l = m_k^l\,\partial_i^{\,j} +\delta_i^l\delta_k^{\,j}+ h\,\delta_k^{\,j}\,\partial_i^l.
\label{perrel}
\end{equation}
If we introduce $N\times N$ matrices $M = \|m_i^{\,j}\|$ and $D = \|\partial_i^{\,j}\|$ then the above permutation relations among the
generators of the double can be written in a matrix form:
\[
D_1M_2 - M_2D_1 = P_{12} +hP_{12}D_2,
\]
where $P_{12}$ is the matrix of the flip and $D_1 = D\otimes I$, $M_2 = I\otimes M$, $I$ being $N\times N$ unit matrix. Note, that
the algebras $U(gl(N)_h)$ and $\DD$ are the proper subalgebras of the quantum double $(\DD, U(gl(N)_h))$.

\begin{remark}
\label{rem:1}
A more general construction of a quantum double, related to the RE algebras will be considered in section 6. Here we only note that,
given a quantum double $(A,B)$ of two associative algebras $A$ and $B$, we can equip the vector space $B\otimes A$ with
the structure of an associative algebra provided we have a permutation map $\sigma: \,A\otimes B\rightarrow B\otimes A$ which preserve
the ideals, generated by the defining relations of both algebras. An evident example of the permutation map existing for any $A$ and
$B$ is the usual flip: $a\otimes b\mapsto b\otimes a$. Such a permutation map define a trivial structure of the quantum double.

Given a quantum double $(A, B)$, in order to convert the elements of the algebra $A$ into operators acting on the algebra $B$ we need a counit $ \varepsilon:\,A\to \C$.
Then the action of an element $a\in A$ onto $b\in B$ is defined as follows. First, we permute
the factors in $a\ot b$  with the use of the permutation map and then apply the counit to the right factors, belonging to $A$. The result of
this procedure $a(b)$ belongs to $B$ (we identify $B$ and $B\ot \C$ in the natural way):
\[
a(b) := (\mathrm{id}_B\otimes \varepsilon)\circ\sigma(a\otimes b).
\]
In case of the trivial permutation map (the usual flip) the action is reduced to multiplicaion by a complex number:
$a(b) = \varepsilon(a) b$.
\end{remark}

Now, consider the case $N=2$ in more detail. It is convenient to pass to the compact form $u(2)_h$ of the Lie algebra $gl(2)_h$.
For this purpose we define new generators $x,y,z$ and $t$:
\[
m_1^1=t-iz,\quad m_1^2=-ix-y,\quad m_2^1=-ix+y,\quad m_2^2=t+iz.
\]
Then the Lie brackets of the new generators read:
\[
[x, \, y]= h\, z,\quad [y, \, z]= h\, x,\quad[z, \, x]=h\, y,\quad [t, \, x]=[t, \, y]=[t, \, z]=0.
\]

Thus, the element $t$ is central in the enveloping algebra $U(u(2)_h)$. Another central element is the quadratic polynomial
\[
{{\Cas}} = x^2+y^2+z^2.
\]
Moreover, $t$ and $\Cas$ generate the whole center $Z(U(u(2)_h))$ of the algebra $U(u(2)_h)$.

The permutation relations among the partial derivatives $\partial_t$, $\partial_x$, $\partial_y$, $\partial_z$ and the
generators $\{t,x,y, z\}$ were derived in \cite{GPS2}. To simplify the formulae we introduce the {\em shifted derivative} in $t$
$\tilde \partial_t = \partial_t +\frac{2}{h}\,1_{\DD}$:
\begin{equation}
\begin{array}{l@{\quad}l@{\quad}l@{\quad}l}
	\tilde\pa_t\,t - t\,\tilde\pa_t = \frac{h}{2}\,\tilde\pa_t & \tilde\pa_t\, x - x\,\tilde\pa_t
	=-\frac{h}{2}\,\pa_x &
	\tilde\pa_t\, y - y\, \tilde\pa_t=-\frac{h}{2}\,\pa_y &\tilde\pa_t\, z - z\,\tilde\pa_t=- \frac{h}{2}\,\pa_z\\
	\rule{0pt}{7mm}
	\pa_x\, t - t\,\pa_x = \frac{h}{2}\,\pa_x &\pa_x \,x -  x\,\pa_x = \frac{h}{2}\,\tilde\pa_t &
	\pa_x \, y-  y\,\pa_x = \frac{h}{2}\,\pa_z & \pa_x \,z - z\, \pa_x  = - \frac{h}{2}\,\pa_y \\
	\rule{0pt}{7mm}
	\pa_y \,t - t \, \pa_y = \frac{h}{2}\,\pa_y & \pa_y \,x -  x\,  \pa_y = -\frac{h}{2}\,\pa_z &
	\pa_y \,y - y \,  \pa_y = \frac{h}{2}\,\tilde\pa_t & \pa_y \,z - z \,  \pa_ y= \frac{h}{2}\,\pa_x\\
	\rule{0pt}{7mm}
	\pa_z \,t - t \,\pa_z = \frac{h}{2}\,\pa_z & \pa_z \,x - x \,\pa_z = \frac{h}{2}\,\pa_y&
	\pa_z \,y -  y\,\pa_z = -\frac{h}{2}\,\pa_x & \pa_z \,z - z \,\pa_z = \frac{h}{2}\,\tilde\pa_t.
\end{array}
\label{perm}
\end{equation}

Extending the above permutation relations to the whole algebras $\DD$ and $U(u(2)_h)$, we get a permutation map
\[\sigma: {\DD}\ot U(u(2)_h)\to U(u(2)_h)\ot {\DD}   \label{map}.\]
Below,  we omit the symbol $\ot$ and the units $1_{U}$ and $1_{\DD}$  when this does not lead to a misunderstanding.

Also, consider the counit $\varepsilon:\DD\to \C$ defined in accordance with \eqref{coun}:
\begin{equation}
\varepsilon(1_{\DD})=1, \quad \varepsilon(\pa_u)=0,\quad \forall\,\, u\in \{t,x,y,z\}.
\label{counn}
\end{equation}
Consequently, for the shifted derivative $\tilde\partial_t$ we get $\varepsilon(\tpa_t)=\frac{2}{h}$.
Then, in order to find the action of a partial derivative $\pa_u$ on an element $b\in  U(u(2)_h)$ we permute the elements
$\pa_u$ and $b$ in the product
$\pa_u\ot b$ with the use of the relations \eqref{perm} and apply the above counit map $\varepsilon$ to the right factors,
belonging to $\DD$. Thus, we get the resulting
element $\pa_u(b)$.

This definition immediately entails that, for example,
\[
\pa_x(x)=1, \quad \pa_x(y)= \pa_x(z)=\pa_x(t) = 0
\]
and so on, that is the partial derivatives act on the generators of the algebra $U(u(2)_h)$ in the classical way.
Also, the result of action of the derivative $\pa_x$ on the quadratic monomial $xy$ is still classical:
\[ \pa_x(xy)= y.\]

However, in general it is not so, as the following examples show. Using the permutation relations, we get:
\[
\pa_x (yz)=(y\pa_x+\frac{h}{2}\pa_z)z=y(z\pa_x-\frac{h}{2}\pa_y)+\frac{h}{2}(z\pa_z+\frac{h}{2}\tpa_t)=
yz\pa_x-\frac{h}{2}y\pa_y+\frac{h}{2}z\pa_z+\frac{h^2}{4}\tpa_t.
\]
Now, on applying the counit to factors from $\DD$, we get $\pa_x (yz)=\frac{h}{2}$.
In the same way we get $\pa_x (zy)=-\frac{h}{2}$. These results are compatible with the relation  $yz-zy=hx$.

The coproduct $\De$ applied to the basis elements $\{\pa_t,\,\pa_x,\,\pa_y,\,\pa_z\}$ reads:
\begin{equation}
\begin{array}{l}
	\De(\pa_t)=\pa_t\ot 1+ 1\ot \pa_t+\frac{h}{2}(\pa_t\ot \pa_t-\pa_x\ot \pa_x-\pa_y\ot \pa_y-\pa_z\ot \pa_z),\\
	\rule{0pt}{5mm}
	\rule{0pt}{5mm}
	\De(\pa_x)=\pa_x\ot 1+ 1\ot \pa_x +   \frac{h}{2}(\pa_t\ot \pa_x+\pa_x\ot \pa_t+\pa_y\ot \pa_z-\pa_z\ot \pa_y),\\
	\rule{0pt}{5mm}
	\De(\pa_y)=\pa_y\ot 1+ 1\ot \pa_y+  \frac{h}{2}(\pa_t\ot \pa_y+\pa_y\ot \pa_t+\pa_z\ot \pa_x-\pa_x\ot \pa_z),\\
	\rule{0pt}{5mm}
	\De(\pa_z)=\pa_z\ot 1+ 1\ot \pa_z+ \frac{h}{2}(\pa_t\ot \pa_z+\pa_z\ot \pa_t+\pa_x\ot \pa_y-\pa_y\ot \pa_x).
\end{array}
\label{arr0}
\end{equation}
The use of the shifted derivative $\tpa_t$ simplifies the above relations to the form:
\begin{equation}
\begin{array}{l}
	\De(\tpa_t)=\frac{h}{2}(\tpa_t\ot \tpa_t-\pa_x\ot \pa_x-\pa_y\ot \pa_y-\pa_z\ot \pa_z),\\
	\rule{0pt}{5mm}
	\rule{0pt}{5mm}
	\De(\pa_x)=\frac{h}{2}(\tpa_t\ot \pa_x+\pa_x\ot \tpa_t+\pa_y\ot \pa_z-\pa_z\ot \pa_y),\\
	\rule{0pt}{5mm}
	\De(\pa_y)=\frac{h}{2}(\tpa_t\ot \pa_y+\pa_y\ot \tpa_t+\pa_z\ot \pa_x-\pa_x\ot \pa_z),\\
	\rule{0pt}{5mm}
	\De(\pa_z)=\frac{h}{2}(\tpa_t\ot \pa_z+\pa_z\ot \tpa_t+\pa_x\ot \pa_y-\pa_y\ot \pa_x).
\end{array} \label{arr2}
\end{equation}
These formulae motivate us to introduce   the following matrix
\begin{equation} {\Theta}=\left(\begin{array}{llll}
	\tpa_t&-\pa_x&-\pa_y&-\pa_z\\
	\pa_x&\,\,\,\,\,\tpa_t&-\pa_z&\,\,\,\,\,\pa_y\\
	\pa_y&\,\,\,\,\,\pa_z&\,\,\,\,\,\tpa_t&-\pa_x\\
	\pa_z&-\pa_y&\,\,\,\,\,\pa_x&\,\,\,\,\,\tpa_t \end{array} \right). \label{seven}
\end{equation}
It allows us to present \eqref{arr2} in the following matrix form:
\[
\De ({\Theta})=\frac{h}{2}\, {\Theta} \stackrel{.}{\otimes} {\Theta},
\]
where the notation $A\stackrel{.}{\otimes} B$ stands for the matrix with entries $(A\stackrel{.}{\otimes} B)_i^j=\sum_k\, A_i^k\ot B_k^j $.

This entails the following important consequence:
\[
{\Theta}(ab)=\frac{h}{2} \,{\Theta}(a){\Theta}(b)=i\hh\, {\Theta}(a){\Theta}(b),\quad \forall\, a,b \in U(u(2)_h),
\]
where we use  the renormalized parameter $h = 2i\,\hh$. The notation ${\Theta}(a)$ means that each entry of the
matrix $\Theta$ is applied to $a$.

Practically, it is more useful to deal with the matrix  $\hatt=i\hh \Theta$. It also defines a map
\[
U(u(2)_h) \ni a \mapsto \hatt(a)\in {\Mat}(U(u(2)_h),
\]
which is a homomorphism of the algebra $U(u(2)_h)$ into the algebra ${\Mat}(U(u(2)_h)$ of $4\times 4$ matrices
with entries belonging to $U(u(2)_h)$. Namely, we have
\begin{equation}
{\hatt}(ab)={\hatt}(a){\hatt}(b),\quad \forall \,a,b \in U(u(2)_h)
\label{The}
\end{equation}
and $\hatt(1_U)=I$. Hereafter, the notation $\hatt$ stands  for the matrix and for the corresponding map
from $U(u(2)_h)$ to ${\Mat}(U(u(2)_h))$.

As an example we present the images of the elements $x,y,z$ and $\Cas$ under the map $\hatt$:
\begin{align}
{\hatt}(x)&=
\left(\!\!\begin{array}{@{}cccc@{}}
 x&-i\hh&0&0\\
 i\hh&x&0&0\\
 0&0&x&-i\hh\\
 0&0&i\hh&x
\end{array}\!\!\right), \notag\\[1ex]
{\hatt}(y)&=
\left(\!\!\begin{array}{@{}cccc@{}}
 y&0&-i\hh&0\\
 0&y&0&i\hh\\
 i\hh&0&y&0\\
 0&-i\hh&0&y
\end{array}\!\!\right), \notag\\[1ex]
{\hatt}(z)&=
\left(\!\!\begin{array}{@{}cccc@{}}
 z&0&0&-i\hh\\
 0&z&-i\hh&0\\
 0&i\hh&z&0\\
 i\hh&0&0&z
\end{array}\!\!\right).
\end{align}
\begin{equation}
{\hatt}(\Cas)=
\left(\!\!
\begin{array}{@{}cccc@{}}
	\Cas+3\hh^2 &   \displaystyle -2i\hh x\,& \displaystyle -2i\hh y &\displaystyle -2i\hh z\\
	\displaystyle  2i\hh x&\Cas+3\hh^2&\displaystyle -2i\hh z &\displaystyle 2i\hh y \\
	\displaystyle 2i\hh y &\displaystyle 2i\hh z &\Cas+3\hh^2 &\displaystyle -2i\hh x\\
	\displaystyle 2i\hh z &\displaystyle -2i\hh y & \displaystyle 2i\hh x&\Cas+3\hh^2
\end{array}
\!\!\right).
\end{equation}
Also, note that  $\hatt(t^p)=(t+i\hh)^p I,\,\, \forall p\in \mathbb{Z }$.

Thus, we have presented a few forms of the Leibniz rule for the QPD. One of them is realised in \eqref{perrel} via the permutation relations of
the QPD and elements of the algebra $U(u(2)_h)$. It is the most general and will be used below in the differential calculus on the RE algebras. 
The other one is based on the bi-algebra structure of the commutative algebra generated by the QPD. And the last one (in fact it is another presentation of the previous one) 
is encoded in the multiplicative property of the map $\hatt$. This
property is very useful for a prolongation of the QPD onto some extensions of the algebra $U(u(2)_h)$, described in the next section.

\section[QPD on a spectral extension of U(u(2)h)]{QPD on a spectral extension of the algebra $U(u(2)_h)$}\label{sec:spectral}

Now, consider the matrix
\begin{equation}
M=\left(\begin{array}{cc}
	m_1^1& m_1^2\\
	m_2^1& m_2^2
\end{array}\right)=\left(\begin{array}{cc}
	t-i\,z& -i\, x-y\\
	-i\, x+y& t+i\, z
\end{array}\right).
\label{matrN}
\end{equation}
with entries belonging to the algebra $U(gl(2)_h)$. It is called the \emph{generating matrix} of the algebra $U(gl(2)_h)$.
The permutation relations among the generators can be presented in the form of matrix equality:
\[
P\,M_1P\,M_1-M_1P\,M_1P=h\,(P\, M_1-M_1 P), \qquad M_1=M\ot I,
\]
where $P$ is the matrix of the usual flip.

Note that $M$ obeys to the following analog of the matrix Cayley-Hamilton identity:
\begin{equation}
\chi(M) := M^2-(2t+h)\, M+\,(t^2+x^2+y^2+z^2+h\, t)\,I= 0.
\label{CH}
\end{equation}

It is natural to introduce the roots $\mu_1,\, \mu_2$ of the polynomial $\chi(s)$:
\[
\mu_1+\mu_2=(2t+h),\qquad \mu_1 \mu_2=t^2+x^2+y^2+z^2+h\, t.
\]
Since the coefficients of the polynomial $\chi(M)$ are central elements of $U(u(2)_h)$, we assume the elements $\mu_1$ and
$\mu_2$ to be central in the extended algebra $U(u(2)_h)[\mu_1,\mu_2]$.

It is convenient to introduce the difference $\mu=\mu_1-\mu_2$.
The following relations hold true:
\begin{equation}
t=\frac{\mu_1+\mu_2-\hh}{2},\qquad {{\Cas}}=\frac{\hh^2-\mu^2}{4}.
\label{cas}
\end{equation}
Recall that $h=2i\hh$. Also, we need the quantity
\begin{equation}
\rh=\frac{\mu}{2i}=\sqrt{\Cas+\hh^2}  = \sqrt{x^2+y^2+z^2+\hh^2},
\label{sign}
\end{equation}
which is called {\em the quantum radius}.

Let $\C(t,\rh) $ stand for the field of all rational functions in the elements $t$ and $\rh$, which are central in the algebra
$U(u(2)_h)[\mu_1,\mu_2]$. Consider the quotient algebra
\begin{equation}
\AA=\left(U(su(2)_h)\ot \C(t,\rh)\right)/\langle x^2+y^2+z^2-\rh^2+\hh^2 \rangle,
\label{quot}
\end{equation}
where $\langle J \,\rangle$ stands for the two-sided ideal generated by a subset $J$, and $su(2)_h\subset u(2)_h$ is a Lie
subalgebra generated by $x$, $y$ and $z$.

The quantum radius is central in the  algebra $\AA$, since it is so for the quantity $\mu$.
In order to fix the sign of the square root in \eqref{sign}, we assume  the quantum
radius to be positive provided $\hh$ is real and $x,y,z$ are  represented by Hermitian operators.

In \cite{GS2} we extended the QPD to the algebra $\AA$. This enables us to compute the image $\hatt(a)$
of any element  $a\in \AA$. For instance, the matrix $\hatt(\rh)$ reads:
\begin{equation} {\hatt}(\rh)=
\frac{1}{\rh}\left(\!\!
\begin{array}{cccc}
	\rh^2+\hh^2 &   \displaystyle -i\hh x\,& \displaystyle -i\hh y &\displaystyle -i\hh z\\
	\displaystyle  i\hh x&\rh^2+\hh^2&\displaystyle -i\hh z &\displaystyle i\hh y \\
	\displaystyle i\hh y &\displaystyle i\hh z & \rh^2+\hh^2 &\displaystyle -i\hh x\\
	\displaystyle i\hh z &\displaystyle -i\hh y & \displaystyle i\hh x& \rh^2+\hh^2
\end{array}
\!\!\right).
\label{mat1}
\end{equation}

It is worth noticing that  the action of the map $\hatt$ onto the commutative algebra $\C(t,\rh)$ is subject to the natural rule:
\[
\hatt(f(t,\rh))=f(\hatt(t), \hatt(\rh)),\qquad \forall\, f\in \C(t,\rh).
\]

\begin{proposition}
The map $\hatt: \AA\to {\Mat}(\AA)$ is well defined, that is the matrices  $\hatt(t)$, $\hatt(\rh)$ commute with $\hatt(a)$
for any $ a\in U(sl(2)_h)$. Moreover, the map $\hatt$ is compatible with the relation ${\Cas}=\rh^2-\hh^2$:
\begin{equation}
\hatt({\Cas})=\hatt({\rh})^2-\hh^2 {I}.
\label{ree}
\end{equation}
\end{proposition}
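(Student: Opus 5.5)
The plan is to reduce both claims to a few identities among constant $4\times 4$ matrices, using the explicit images of $t,x,y,z$ and $\rh$ given above. The matrix $\hatt(t)=(t+i\hh)I$ is scalar with central entries, so it commutes with every $\hatt(a)$, and only $\hatt(\rh)$ needs work. Since $\hatt$ is multiplicative by \eqref{The}, and $U(sl(2)_h)$ is generated by $x,y,z$, it is enough to show that $\hatt(\rh)$ commutes with $\hatt(x)$, $\hatt(y)$ and $\hatt(z)$.

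\textbf{Step 1: splitting the images.} From the displayed matrices we write
\[
\hatt(u)=u\,I+\hh\,K_u, \qquad u\in\{x,y,z\},
\]
where the $K_u$ are constant matrices with entries in $\{0,\pm i\}$. From \eqref{mat1} we write
\[
\hatt(\rh)=\rh^{-1}\bigl((\rh^2+\hh^2)I+\hh L\bigr), \qquad L=x\,L_x+y\,L_y+z\,L_z,
\]
where the $L_u$ are again constant. The factors $\rh^{-1}$ and $\rh^2+\hh^2$ are central in $\AA$. So $[\hatt(\rh),\hatt(x)]=0$ reduces to $[xI+\hh K_x,\,L]=0$. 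Because the $K$'s and $L$'s have scalar entries, this commutator equals
\[
\sum_v [x,v]\,L_v+\hh\sum_v v\,[K_x,L_v].
\]
Inserting $[x,y]=2i\hh z$ and its cyclic analogues, and comparing coefficients of $x,y,z$, the required identity becomes a finite set of relations $[K_u,L_v]=-2i\,\epsilon_{uvw}L_w$ (up to the sign convention, which I will fix by direct inspection).

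\textbf{Step 2: the quaternion structure.} I expect these relations to follow from a quaternionic structure. The shape of $\Theta$ in \eqref{seven} suggests that, up to factors of $i$, the $L_u$ act as left multiplications by the imaginary quaternion units and the $K_u$ as right multiplications, or as a combination of the two. In that picture the $L$'s close among themselves as $su(2)$, $L_uL_v=-\delta_{uv}I\pm\epsilon_{uvw}L_w$ up to normalization, and commute or anticommute with the $K$'s in a controlled way. I would verify the needed products of these $4\times 4$ matrices directly; it is a small finite computation.

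\textbf{Step 3: the relation \eqref{ree}.} Comparing with the displayed $\hatt(\Cas)$ gives $\hatt(\Cas)=(\Cas+3\hh^2)I+2\hh L$. Expanding $\hatt(\rh)^2$ and using $\Cas=\rh^2-\hh^2$, a short computation shows that \eqref{ree} is equivalent to the single identity
\[
L^2=\Cas\,I-2\hh\,L .
\]
Here the $\hh$-independent part of $L^2$ is $\sum_u u^2L_u^2$, which gives $\Cas\, I$ provided $L_u^2=I$. The linear term comes from $\sum_{u<v}(uv\,L_uL_v+vu\,L_vL_u)$, which can be rewritten as $\sum_{u<v}\bigl(uv(L_uL_v+L_vL_u)+[v,u]\,L_vL_u\bigr)$. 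If the $L_u$ pairwise anticommute, the first summand vanishes, and $[v,u]=\pm 2i\hh w$ with $L_vL_u\propto L_w$ produces exactly $-2\hh L$.

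The main obstacle is this sign and normalization bookkeeping for the products $L_uL_v$ and the commutators $[K_u,L_v]$. All of it is mechanical, and I would settle it by multiplying the explicit matrices read off from \eqref{mat1} and from $\hatt(x),\hatt(y),\hatt(z)$, rather than trusting the quaternion heuristic for the signs.
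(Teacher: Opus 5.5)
Your plan is correct and is essentially the paper's ``straightforward computation'', only organized. The reduction to the generators via \eqref{The} and to identities among constant $4\times 4$ matrices is sound, and both of your reductions check out: the commutation claim reduces to the $[K_u,L_v]$ relations, and \eqref{ree} reduces to $L^2=\Cas\,I-2\hh L$.

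The bookkeeping is simpler than you expect. Reading off \eqref{mat1} gives $L_u=K_u$ exactly, since both are $i$ times the pattern of $\pa_u$ in $\Theta$. These matrices satisfy $K_uK_v=\delta_{uv}I+i\epsilon_{uvw}K_w$, which gives at once $[K_u,L_v]=+2i\epsilon_{uvw}L_w$ (the sign is $+$, not $-$) and $L^2=\Cas\,I-2\hh L$. You should drop the ``left versus right multiplication'' heuristic: if the $K$'s commuted with the $L$'s, the commutator in Step 1 would be $2i\hh(zL_y-yL_z)\neq 0$, and Step 1 would fail.
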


\begin{proof}
Both claims follow by straightforward computations.
\end{proof}

Now, we turn to the second step of extending the algebra $U(gl(2)_h)$ and consider the skew-field $\Ah=\AA[\AA^{-1}]$. This
skew-field consists of left fractions $a^{-1} b$, $a,b \in\AA$, $a\not=0$ which in virtue of the Ore property can be presented as
right fractions $c\,d^{-1}$. We want to present a way of extending the QPD onto the algebra $\Ah$.

First, recall the method of solving the same problem in a commutative unital algebra $A$.
Let  $V:A\to A$ be a vector field,  i.e. an operator  subject to the usual Leibniz rule.
Then applying $V$ to the element $a\,a^{-1}=1$ (provided that $a$ is invertible) we get
\[
V(a)\,a^{-1}+a\,V(a^{-1})=0\quad \Rightarrow\quad V(a^{-1})=- a^{-1}\, V(a)\, a^{-1}.
\]

However, the classical Leibniz rule is not valid in the algebra $U(u(2)_h)$ any more. Instead, we have a new form of the Leibniz rule,
which consists in the multiplicative property of the map $\hatt$. In order to preserve this property for the algebra $\Ah$, we set
\emph{by definition} $\hatt(a^{-1})=\hatt(a)^{-1}$, provided the matrix $\hatt(a)$ is invertible in the algebra ${\Mat}(\Ah)$.

Observe that the matrix $\hatt(\rh)$ in \eqref{mat1} is invertible. Moreover, it is not difficult to find the matrix $\hatt(\rh^p)=i\hh \Theta(\rh^p)$,
$p\in \mathbb{Z}$. Its entries can be easily computed according to the following formulae (see  \cite{GS2}, formula (2.12))
\begin{equation}
\tpa_t(\rh^p)=\frac{-i}{2\hh\rh}((\rh+\hh)^{p+1}+(\rh-\hh)^{p+1}).
\label{pp}
\end{equation}
Consequently, we have
\begin{equation}
\pa_t(\rh^p)=\frac{i}{2\hh\rh}(2\rh^{p+1}-(\rh+\hh)^{p+1}-(\rh-\hh)^{p+1}).
\label{pppp}
\end{equation}
It is not difficult to see that for $\hh\to 0$ this derivative vanishes.

As was shown in \cite{GS2}, the following formulae are valid:
\begin{equation}
\pa_x(\rh^p)=\frac{x}{\rh}\,\parh(\rh^p), \qquad \pa_y(\rh^p)=\frac{y}{\rh}\,\parh(\rh^p),\qquad  \pa_z(\rh^p)=\frac{z}{\rh}\,\parh(\rh^p).
\label{ppp}
\end{equation}
Note that above relations are identical to the classical ones. In particular, for $p=-1$ we have
\[
\parh(\rh^{-1})=\frac{1}{\hh^2-\rh^2},\qquad \pa_x(\rh^{-1})=\frac{x}{\rh(\hh^2-\rh^2)},\qquad
\tpa_t(\rh^{-1})=\frac{-i}{\hh \rh}\quad\Rightarrow\quad \partial_t(\rh^{-1}) = 0.
\]

Also, in \cite{GS2} the derivative with respect to the quantum radius was introduced by the rule:
\[
\parh(f(\rh))=\frac{f(\rh+\hh)-f(\rh-\hh)}{2\hh}.
\]
\begin{remark}
Observe that the ordering of the eigenvalues $\mu_i$ is arbitrary. So, the quantity $\mu$ and consequently $\rh$ is defined up
to a sign. However, the	quantity $\frac{\pa_{\rh}}{\rh}$ is invariant under the change $\rh\mapsto -\rh$.
\end{remark}

Now, we are able to calculate the matrix $\hatt(\rh^p)$, $p \in \mathbb{Z}$. We have
\begin{equation}
\hatt(\rh^p)=\frac{(\rh+\hh)^{p+1}+(\rh-\hh)^{p+1}}{2\rh}\, I-\frac{i((\rh+\hh)^{p}-(\rh-\hh)^{p})}{2\rh}\, X,
\label{ten}
\end{equation}
with the following matrix:
\[
X=
\left(\!\!
\begin{array}{cccc}
	0 & -x\,&-y&-z\\
	x& 0 &-z&y\\
	y& z & 0 & -x\\
	z&-y&  x& 0
\end{array}
\!\!\right).
\]

It can be easily checked that
\[
\hatt(\rh^p)\,\hatt(\rh^q)=\hatt(\rh^{p+q}),\qquad \forall\, p,q \in \mathbb{Z}.
\]

\section{Differential forms and de Rham operator via QPD}\label{sec:differential}

Our current aim is to incorporate  differentials in our calculus. We show that once the QPD are defined, the differentials can be also
easily defined. In this section we are dealing with the general case $U(gl(N)_h)$.

Let us introduce $N^2$ new elements $d_i^j$, $1\le i,j\le N$, which are assumed to be skew-com\-mu\-ta\-tive:
\[
d_i^j\, d_k^l=-d_k^l\, d_i^j,\qquad 1\leq i,j,k,l \leq N.
\]
They generate a skew-symmetric algebra which will be denoted $\Om=\Om(gl(N))$. Elements of this algebra will be called
{\em pure differentials}. This algebra is a direct sum of its $k$-th order homogeneous components $\Om^k(gl(N))$, $0\le k\le N^2$.

The tensor product
\begin{equation}
\Om(gl(N))\ot U(gl(N)_h) \label{produc}
\end{equation}
is a right $U(gl(N)_h)$-module.

Define the linear de Rham operator $\dd: \Omega\otimes U(gl(N)_h)\rightarrow \Omega\otimes U(gl(N)_h)$ as
follows:
\begin{equation}
\dd(\om\ot a):= \om\, \sum_{i, j} d_i^j\ot \pa_j^i(a),\qquad \om\in \Om(gl(N)),\quad a\in \UU.
\label{dR-act}
\end{equation}
In other words, the sum $\sum_{i, j} d_i^j\ot \pa_j^i$ is placed  between the elements $\om\in \Om$ and $a\in U(gl(N)_h)$ and  the
QPD are applied to the  element $a$. On other elements we extend this operator by linearity. 

Taking into account \eqref{gen-act} we immediately get $\dd(m_i^j)= d_i^j$. Thus, the element $d_i^j$ is  the differential of the
generator $m_i^j$.

\begin{proposition}
The following holds $ \dd^2=0$.
\end{proposition}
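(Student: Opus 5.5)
Applying the definition \eqref{dR-act} twice gives
\[
\dd^2(\om\ot a)=\om\sum_{i,j,k,l} d_k^l\, d_i^j\ot \pa_l^k\pa_j^i(a).
\]
The claim will follow from two facts. First, the pure differentials anticommute, so $d_k^l d_i^j=-d_i^j d_k^l$. Second, the QPD commute as operators on $\UU$, so $\pa_l^k\pa_j^i(a)=\pa_j^i\pa_l^k(a)$. The double sum then pairs a tensor antisymmetric in the index pairs $((k,l),(i,j))$ with one symmetric in them. After relabelling $(k,l)\leftrightarrow(i,j)$ the sum equals its own negative, hence vanishes. The diagonal terms $(k,l)=(i,j)$ also vanish, since $(d_i^j)^2=0$ by skew-commutativity.

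So the whole proof reduces to checking that $a\mapsto\pa_i^j(a)$ is a representation of the commutative algebra $\DD$ on $\UU$. That is, the operator attached to the product $\pa_l^k\pa_j^i\in\DD$ must coincide with the composition of the two operators, and composition must respect the commutativity of $\DD$. This is the one point needing care, because the action is defined through the permutation map and the counit (Remark~\ref{rem:1}), not as a composition of maps.

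I would argue via the quantum double $(\DD,\UU)$. The permutation map $\sigma$ is extended multiplicatively to all of $\DD\ot\UU$, consistently with the defining relations of both algebras; this is stated in Section~\ref{sec:2} and in \cite{GPS2}. Hence for $\omega_1,\omega_2\in\DD$ we can move $\omega_1\omega_2$ past $a$ in two steps. First $\omega_2$ passes $a$, giving $\sum a'\ot\omega_2'$. Then $\omega_1$ passes each $a'$. Applying $\varepsilon$ to the right factors, and using that $\varepsilon$ is an algebra homomorphism on $\DD$ (it is the counit of the bialgebra, defined on generators by \eqref{coun} and extended multiplicatively), we get $(\omega_1\omega_2)(a)=\omega_1(\omega_2(a))$. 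The essential input is that $\sigma$ satisfies $\sigma(\omega\ot a)=\omega_{(1)}(a)\ot\omega_{(2)}$, as in \eqref{first}, for all $\omega$. This is the standard Heisenberg-double structure: the action defined by \eqref{Lei} is a module-algebra action of the bialgebra $\DD$, compatible with $\De$ being an algebra map. Since $\omega_1\omega_2=\omega_2\omega_1$ in $\DD$, the two compositions agree.

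The main obstacle is exactly this representation property. It rests on $\De$ being multiplicative and the action being well defined on the relations \eqref{gl-h}, both of which the paper asserts via \cite{GPS2}. Given these, the anticommutation argument above is immediate.
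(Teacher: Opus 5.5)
Your proof is correct and follows the paper's argument: the paper also writes $\dd^2(\om\ot a)=\om\sum d_i^j d_k^l\ot\pa_l^k(\pa_j^i(a))$ and kills it by pairing the skew-symmetrized product of differentials with the symmetrized product of commuting QPD. The only difference is that you explicitly justify, via associativity of the quantum double and multiplicativity of $\varepsilon$, that the QPD commute as operators on $\UU$; the paper takes this for granted from its statement that the action is a representation of $\DD$. (Your ordering $d_k^l d_i^j$ is reversed relative to the definition, which only changes an overall sign and is harmless.)
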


\begin{proof}
If we apply the operator $\dd$ twice, we get:
\[
\dd^2(\om\ot a):=  \om\, \sum_{i, j} \sum_{k, l}d_i^j d_k^{\,l} \ot \pa_l^k(\pa_j^i(a)).
\]
Now, we use the fact that the elements $d_i^j$ and $d_k^{\,l}$ are skew-commutative, so their product in the algebra $\Om$ can
be identically presented in the skew-symmetrized form:
\[
d_i^jd_k^{\,l} = \frac{1}{2}\,(d_i^jd_k^{\,l}-d_k^{\,l}d_i^j).
\]
In a similar manner we can symmetrize the product of commutative QPD:
\[
\pa_l^k\pa_j^i = \frac{1}{2}\,(\pa_l^k\pa_j^i+\pa_j^i \pa_l^k).
\]
By multiplying these respectively skew-symmetrized and symmetrized expressions we get identical zero:
\[
\sum_{i,j,k,l}
d_i^j d_k^{\,l} \ot \pa_l^k\pa_j^i\equiv 0.
\]
\end{proof}

It should be emphasized that by \eqref{dR-act} the de Rham operator $\dd$ is defined on the elements $\om \ot a$. If we want to apply
$\dd$ to an element $a\ot \om$, we have to transform it to $\om \ot a$ (we call this {\em the canonical form}) and then apply
the de Rham operator in accordance with \eqref{dR-act}. This transformation is possible provided we permute the factors by means
of the usual flip. Otherwise stated, we identify $a\otimes \omega$ with $\omega\otimes a $ for any $\om\in \Om(gl(N))$ and
$a\in  U(gl(N)_h)$. This means that we impose the permutation relations $\om\ot a=a\ot\om$.

Also, by using these permutation relations  we can define the following product
\[
(\om_1\ot a_1)\cdot (\om_2\ot a_2):= \om_1\wedge \om_2\ot (a_1a_2)
\]
on $\Om(gl(N))\ot U(gl(N)_h)$.

\begin{remark}
The permutation relations between elements of $\Omega$ and $U(gl(N)_h)$ do not affect the Leibniz rule for the
de Rham operator $\dd$, since it is always applied only to elements written in the canonical form. This is the reason why
in the definition \eqref{dR-act} of the de Rham operator we do not introduce the factor $(-1)^p$ for $\om\in \Om^p(gl(N))$ which
is usual for the classical differential calculus.
In the classical case the relation $a\, \dd b=\dd b\, a$ is assumed and by applying the de Rham operator to this relation we get a contradiction if the factor 
$(-1)^p$ is omitted. 

However, in our scheme we have to apply the relation $\om\ot a=a\ot \om$ only on the final step, if we want to reduce the differential form to the canonical presentation.
\end{remark}

Consider now the low dimensional example with the algebra $U(u(2)_h)$. In this case the de Rham operator $\dd$ acts on
the space $\Om(u(2)_h)\ot  U(u(2)_h)$ and explicitly can be written as (see Section \ref{sec:2} for definitions):
\begin{equation}
\dd=\dd t \, \pa_t+\dd x \,\pa_x+\dd y\, \pa_y+\dd z\, \pa_z.
\label{clas}
\end{equation}
In the same manner as above this sum should be placed between elements from $\Om$ and $U(u(2)_h)$ and the QPD should be
applied to the right factor.

Let us recall that under a linear change of generators, the corresponding QPD and the differentials are  transformed in the classical way.
So, by passing to the basis $\{t, x, y, z\}$  (see \ref{matrN}), we get \eqref{clas}.

As an example  we explicitly compute $\dd(yz)$:
\[
\dd(yz)=\dd t \, \pa_t(yz)+\dd x \,\pa_x(yz)+\dd y\, \pa_y(yz)+\dd z \pa_z(yz)=
\dd y\, z+\dd z \, y +\frac{h}{2}\, \dd x.
\]
This result differs from its classical counterpart but coincides with it in the limit $h=0$.

Also, we have
\[
\dd(x^2)=2 \, \dd x\, x-\frac{h}{2}\, \dd t.
\]

\section[Characteristic classes on the extended U(u(2)h)]{Characteristic classes on the extended $U(u(2)_h)$}\label{sec:characteristic}

In this section we consider the so-called \emph{Grassmannian connection} which is well-defined on the projective modules. Here
we constrain ourselves with considering projective modules over the enveloping algebra $U(u(2)_h)$.

Let $\mathcal{W}$ be a free $U(u(2)_h)$ module and $\mathcal{V}\subset \mathcal{W}$ be a projective submodule corresponding to
a projector $P$: $\mathcal{V} = P(\mathcal{W})$. Then by definition the Grassmannian connection is the composition
of operations $P\circ \dd$. According to \cite{Gom} the corresponding first Chern class ${\Ch_1(P)}$ is given by the trace:
\[
{\Ch_1(P)} := \Tr\, (P \dd P \dd P).
\]

Note that the generating matrix $M$ (see  \eqref{matrN}) of the algebra $U(u(2)_h)$ meets the second degree Cayley-Hamilton identity:
\[
(M-\mu_1I)(M-\mu_2I) = 0.
\]
The corresponding idempotents are
\[
P_1=\frac{M-\mu_2\, I}{\mu_1-\mu_2},\qquad P_2 =\frac{M-\mu_1\, I}{\mu_2-\mu_1}.
\]
The idempotents are complementary: $P_1P_2=0$, $ P_1+P_2=I$.

In order to compute
\[
\dd P_1=\dd t \,\partial_tP_1 +\dd x \,\partial_xP_1 +\dd y \,\partial_yP_1 + \dd z \,\partial_zP_1
\]
we have to find the partial derivatives of $P_1$ with respect to generators $t$, $x$, $y$ and $z$. For this purpose,
we first express $\mu_2$ in terms of $t$ and $\rh$. On taking into account \eqref{cas} and \eqref{sign}, we get:
\[
\left\{
\begin{array}{l}
\mu_1+\mu_2=2t+2i\hh\\
\rule{0pt}{5mm}
\mu_1-\mu_2=\mu=2i \rh
\end{array}
\right.\quad \Rightarrow\quad
\mu_2=t - i\rh +i\hh.
\]
So, we rewrite $P_1$ in the form:
\[
P_1=\frac{M-\mu_2\,I}{\mu_1-\mu_2}=\frac{M-(t+i\hh-i\rh)\,I}{2i\rh} = \frac{M}{2i\rh}+\left(\frac{1}{2}-\frac{t}{2i\rh}-\frac{\hh}{2\rh}\right) I.
\]
Taking into account the formulae
\[
\pa_t(M)=\left(\!\!\begin{array}{cc}
	1& 0\\
	0& 1
\end{array}\!\!\right),\quad \pa_x(M)=\left(\!\!\begin{array}{cc}
	0& -i\\
	-i& 0
\end{array}\!\!\right),\quad \pa_y(M)=\left(\!\!\begin{array}{cc}
	0& -1\\
	1& 0
\end{array}\!\!\right),\quad \pa_z(M)=\left(\!\!\begin{array}{cc}
	-i& 0\\
	0& i
\end{array}\!\!\right),
\]
the coproduct \eqref{arr0} and relations \eqref{pppp} and \eqref{ppp} for $p=-1$, we come to the following result:
\begin{align*}
\pa_tP_1&=\frac{\hh}{\hh^2-\rh^2}\,B,\\
\pa_xP_1&=\frac{1}{2\rh}
\begin{pmatrix}0&-1\\-1&0\end{pmatrix}
+\frac{x}{2\rh(\hh^2-\rh^2)}\,B,\\
\pa_yP_1&=\frac{1}{2\rh}
\begin{pmatrix}0&-i\\ i&0\end{pmatrix}
+\frac{y}{2\rh(\hh^2-\rh^2)}\,B,\\
\pa_zP_1&=\frac{1}{2\rh}
\begin{pmatrix}-1&0\\0&1\end{pmatrix}
+\frac{z}{2\rh(\hh^2-\rh^2)}\,B.
\end{align*}
where
\[
B=\left(\!\!\begin{array}{cc}
		z&x+i y\\
		x-i y&-z
	\end{array}\!\!\right).
\]

Now, we are able to compute the NC counterpart of the first Chern class. By definition,
\[
\operatorname{Ch}_1(P_1) := \Tr(P_1\,\dd P_1\,\dd P_1)
=\sum_{i<j}\Tr\Bigl(P_1\big(\partial_{a_i}P_1\partial_{a_j}P_1-\partial_{a_j}P_1\partial_{a_i}P_1\big)\Bigr)\,\dd {a_i}\wedge \dd {a_j},
\]
where $a_i\in\{t,x,y,z\}$. Matrix multiplication is performed in the written order.

Using the above matrices for partial derivatives of $P_1$ and the relation $x^2+y^2+z^2=\rh^2-\hh^2$ we get:
\begin{align}
\operatorname{Ch}_1(P_1)
&= \Tr(P_1\,\dd P_1\,\dd P_1) \notag\\
&\quad-\frac{i\hh^2}{\rh^3(\hh^2-\rh^2)}
\left(x\,\dd t\wedge \dd x+y\,\dd t\wedge \dd y+z\,\dd t\wedge \dd z\right) \notag\\
&\quad-\frac{i\left(5\hh^3-2\hh^2\rh-2\hh\rh^2+\rh^3\right)}
{2\rh^3(\hh-\rh)^2(\hh+\rh)} \notag\\
&\qquad\times
\left(z\,\dd x\wedge \dd y+y\,\dd z\wedge \dd x+x\,\dd y\wedge \dd z\right).
\label{first-q-Ch}
\end{align}

Now, let us discuss the classical limit of the above quantum Chern class. Let $\hh\to 0$ and $\rh\to r$. Then the coefficients
of elements containing $\dd t$ vanish:
\[
-\frac{i\hh^2}{\rh^3(\hh^2-\rh^2)}\longrightarrow 0.
\]
For the terms without $\dd t$ the coefficient  has the following limit
\[
-\frac{i(5\hh^3-2\hh^2\rh-2\hh\rh^2+\rh^3)}
{2\rh^3(\hh-\rh)^2(\hh+\rh)}
\longrightarrow
-\frac{i}{2r^3}.
\]
Therefore the corresponding characteristic differential two-form has the classical limit
\[
\Tr(P_1\,\dd P_1\,\dd P_1)
\longrightarrow
-\frac{i}{2r^3}
\bigl(z\,\dd x\wedge \dd y+y\,\dd z\wedge \dd x+x\,\dd y\wedge \dd z\bigr).
\]

This coincides  (up to a normalizing factor) with the Chern form of  the classical rank-one projector on the usual sphere of radius
$r$. Equivalently, the normalized first Chern class on this sphere  is
\[
\operatorname{Ch}_1(P_1)
=\frac{1}{2\pi}\Tr(P_1\,\dd P_1\,\dd P_1)
\longrightarrow
-\frac{1}{4\pi r^3}
\bigl(z\,\dd x\wedge \dd y+y\,\dd z\wedge \dd x+x\,\dd y\wedge \dd z\bigr).
\]
On the oriented sphere $x^2+y^2+z^2=r^2$, this is, up to the chosen orientation/sign convention for
$P_1$, the usual area form divided by $4\pi r^2$. Note that its integral is $-1$ for the orientation induced by
$x\,\dd y\wedge \dd z+y\,\dd z\wedge \dd x+z\,\dd x\wedge \dd y$.

It should be emphasized that we are dealing with NC analogues of a four-dimensional vector space and the 2-form can be integrated
only if it is restricted onto a variety of dimension 2. However, the second Chern form of the idempotent under consideration is
\[
\Tr\,(P_1\,\dd P_1\,\dd P_1)^2.
\]
It is a four-form, and a direct ordered computation gives zero.

\begin{remark}
In  numerous publications a so-called fuzzy sphere is considered. It can be defined as a quotient of the algebra $\AA$ \eqref{quot}
	over the ideal generated by $t$ and
	$\rh-c$, where $c=const>0$. The cotangent fibre on such a NC sphere can be introduced as the following quotient
\[
	\Om(u(2)_h)\ot U(u(2)_h)/\langle \dd t,\,\, \dd x\, x+\dd y\, y+\dd z\, z \rangle.
\]
\end{remark}

\section{Further generalizations}\label{sec:generalizations}

Note that the definition of the QPD on the algebra $\UU$ enables us to introduce the de Rham operator without using the classical
Leibniz rule. Moreover, our differential calculus developed on the algebra $\UUU$ is a deformation of that on the algebra $\Sym(u(2))$.

Consider now the algebras $\UU$ and the QPD introduced in section 2. It is convenient to introduce the shifted
derivatives $\tpa_i^j$ and the matrix $\tatt$ by the following rules:
\[
\tpa_i^j=\pa_i^j+\frac{1}{h}\,\de_i^j,\qquad \tatt=i\hh\|\tpa_i^j\|^t,
\]
where $t$ stands for  the transposition. The corresponding map is also multiplicative:
\begin{equation}
\tatt(ab)= \tatt(a)\tatt(b),\quad \forall\, a,b \in \UU.
\label{prope}
\end{equation}

The generating matrix $M$ of the algebra $\UU$ satisfies the Cayley-Hamilton identity \eqref{CH0} with central coefficients and,
therefore, one can introduce the corresponding eigenvalues $\mu_i$, $1\le i\le N$. Now, a natural question arises: how to extend
the QPD to these quantities. Under the requirement that the multiplicativity \eqref{prope} is preserved for the extended algebra we
arrive at the following system:
\[
\sum_i^N\, Q_i=a_1 I,\qquad  \sum_{i<j}^N\, Q_i\, Q_j=a_2 I,\quad \dots\quad  \prod_{i}^N\, Q_i=a_N I,
\]
where $Q_i:=\tatt(\mu_i)$. Also, note that the matrices $Q_i$ should commute with each other. So far, a solution to this system
has been found for $N=2$ only. However, once such matrices are found the characteristic forms can be computed as above:
\[
\Tr\,(P_i\, \dd P_i\, \dd P_i)^k.
\]

The next step of  generalization is related to the RE algebras $\MM(R)$.  Each such an algebra is related to a
braiding $R$. We assume $R$ to be a skew-invertible Hecke symmetry.

\begin{remark}
For the terminology and details on braidings $R$ the reader is referred to \cite{GPS1}. Here we only note that for any
skew-invertible braiding $R\in \mathrm{End}(V^{\otimes 2})$, where $V$ is a vector space of dimension $N<\infty$,
there exists an extension to a braiding
\[
\mathcal{R}\in \mathrm{End}\big((V\oplus  V^*)^{\ot 2}\big),
\]
where $V^*$ is the dual space to $V$. For such a braiding there exits a skew-inverse operator $\Psi$ with the
following property:
\begin{equation}
\Tr_{(2)}R_{12}\Psi_{23} = \Tr_{(2)}\Psi_{12}R_{23} = P_{13}.
\label{def-psi}
\end{equation}
Here $R_{12} = R\otimes \mathrm{id}_V$ and $R_{23} = \mathrm{id}_V\otimes R$ are embeddings of $R$ into
$\mathrm{End}(V^{\otimes 3})$.

The partial traces of the operator $\Psi$ define the operators $B$ and $C$ by the rules:
\[
B_2 = \Tr_{(1)}\Psi_{12}, \quad C_1 = \Tr_{(2)}\Psi_{12}.
\]
The braiding $R$ is called strictly skew-invertible if $B$ and  $C$ are invertible (note that they are invertible simultaneously). If in addition $R$ is an even
braiding with the birank $(m|0)$ (a positive integer $m\le N$) then
\[
BC = q^{-2m}I.
\]

These operators are very useful for describing the structure of the corresponding RE algebra. In particular, the operator $C$ enters
the construction of the $R$-trace $\Tr_R X=\Tr (C X)$, where $X$ is any square $N\times N$ matrix. This $R$-trace replaces the
usual trace in all constructions related to the RE algebras.
\end{remark}

However, in this case  we do not know any analogue of the matrix $\tatt$ with similar properties.
We can only define the QPD  via the so-called quantum doubles. As we described above,
each of them consists of two associative  unital algebras, endowed with some permutation relations.

The system introduced in \cite{GPS2} gives rise to a braided analogue of the Heisenberg--Weyl algebra:
\begin{align}
R_{12}M_1R_{12}M_1-M_1R_{12}M_1R_{12}
&=h(R_{12}M_1-M_1R_{12}), \notag\\
R_{12}^{-1}D_1R_{12}^{-1}D_1-D_1R_{12}^{-1}D_1R_{12}^{-1}
&=0, \label{q-Weyl}\\
D_1R_{12}M_1R_{12}-R_{12}M_1R_{12}^{-1}D_1
&=R_{12}+hD_1R_{12}. \notag
\end{align}

Here, $M_1 = M\otimes I$, $D_1= D\otimes I$, where $M=\|m_i^j\|$ is the generating matrix of the RE algebra, the matrix
$D=\|\pa_i^j\|$ is composed of the QPD. The permutation relations presented in the last matrix equality enable us to compute
the action of the QPD on any polynomial in the generators $m_i^j$. It is a quantum analog of the Leibniz rule for QPD.

Denote $\MM(R)$ (respectively $\DD(R)$) the RE algebras generated by the matrix $M$ (respectively by $D$). If a Hecke
symmetry $R$ is even\footnote{If $R$ is not even, the Cayley-Hamilton identity also exists but it is more sophisticated.} and its
bi-rank is $(m|0)$, the  matrix $M$ satisfies the Cayley-Hamilton identity, similar to that \eqref{CH0}, but with $m$ instead of $N$.
It should be emphasized that the coefficients of the corresponding polynomial are always central. So, the central eigenvalues
$\mu_i$, $1\le i\le m$, of the matrix $M$ can be introduced in the same way as above. However, in this case we do not know
how to extend the action of the QPD onto these quantities.

Nevertheless, the de Rham operator and the corresponding differentials can be defined and now we pass to their construction.

First, compute the action of the QPD on the generators of the algebra $\MM(R)$. In accordance with the general construction
described above we apply the counit $\varepsilon(D) = 0$ to the rightmost derivatives $D$ in the last line of the
system  \eqref{q-Weyl} and find:
\begin{equation}
D_1\triangleright R_{12} M_1R_{12} = R_{12} \quad  \Leftrightarrow \quad D_1\triangleright R_{12} M_1= I \quad
\Leftrightarrow \quad D_1\triangleright M_{\ov 2}=R_{12}^{-1}.
\label{middle}
\end{equation}
Here $\triangleright$ stands for the action of a linear operator and $M_{\overline 2} = R_{12}M_1R_{12}^{-1}$.

With the use of \eqref{def-psi} one can get an explicit formula for the action of a given $\partial_i^{\,j}$ on any generator $m_a^b$.
Indeed, upon multiplying the middle equality of \eqref{middle} from the right hand side by $\Psi_{23}$
and then applying the usual trace in the second position, we get in virtue of \eqref{def-psi}:
\[
D_1 \triangleright M_2=P_{12}\, B_1 \quad \Leftrightarrow \quad \pa_i^j\triangleright m_a^b=\de_i^b B_a^j.
\]

The permutation relations in the third line of he system \eqref{q-Weyl} allow one to compute the action of $D$ on elements of
the second degree:
\begin{equation}
D_1\triangleright M_{\ov 2}\, M_{\ov 3}=R_{12}^{-1} M_{\ov 3}+M_{\ov 2}R_{12}^{-1} R_{23}^{-1}R_{12}^{-1}+
h R_{12}^{-1}R_{23}^{-1}.
\label{secdeg}
\end{equation}
Here $M_{\overline 3} = R_{23}M_{\overline 2}R_{23}^{-1}.$

Let us introduce new elements $d_i^{\,j}$, $1\leq i,j \leq N$, (their meaning and properties will be explained below) and compose
 the matrix $\DDDD=\|d_i^{\,j}\|$. The de Rham operator is defined as follows:
\begin{equation}
\dd=\Tr_R(\DDDD D):=\sum_{i,j,k}^N C_i^j \, d_j^{\,k}\, \pa_k^{\,i}.
\label{ddR}
\end{equation}

The meaning of the  elements $d_i^{\,j}$ becomes clear if we apply the de Rham operator $\dd$ to the elements $m_k^l$:
\begin{equation}
\dd(m_i^j)=C_a^b\, d_b^{\,c}\, \pa_c^{\,a}(m_i^j)= C_a^b\, d_b^{\,c}\, B_i^a \, \de _c^j= (BC)_i^b\, d_b^{\,j}=q^{-2m} d_i^{\,j}.
\label{diff}
\end{equation}
Thus, up to a numerical factor the element $d_i^{\,j}$ equals to the result of the applying the de Rham operator to the generator $m_i^j$.
So, we call the elements $d_i^{\,j}$ the {\em pure differentials}.

In the formulae below, $\dd M$ denotes the matrix composed of the elements $\dd(m_i^j)$; it differs from the matrix
$\DDDD=\|d_i^{\,j}\|$ by the scalar factor $q^{-2m}$.

\begin{remark}
The above constructions are still valid for the RE algebra related to a Hecke symmetry of general bi-rank $(m|n)$. In this case the
factor $q^{-2m}$ should be replaced by $q^{-2(m-n)}$. By setting $q=1$ we get the de Rham operator on the super-algebra
$U(gl(m|n)_h)$.
\end{remark}

Now, we assume that the generators $d_i^{\,j}$ are skew-commutative in the following sense:
\begin{equation}
R_{12}\DDDD_1R_{12} \DDDD_{1}=-\DDDD_1R_{12} \DDDD_{1} R_{12}^{-1}.
\label{skew}
\end{equation}
Denote the algebra generated by differentials $d_i^{\,j}$ subject to \eqref{skew} by $\Dif$.
In addition, we define the permutation relations between elements $m_i^j$ and $d_i^{\,j}$ by the rule:
\begin{equation}
R_{12}M_1R_{12} \DDDD_{1} = \DDDD_1 R_{12} M_{1} R_{12}.
\label{perm1}
\end{equation}
We need these permutation relations in order to bring any differential form to the canonical presentation. This means that all differentials
are put on the leftmost position. However, as we said above, this canonical form can be only used on the last step. 

Observe that the defining relations for the algebra $\DD(R)$ (the second line in \eqref{q-Weyl}) can be written in the following equivalent
forms:
\[
R_{12} D_{\ov 2} D_1=D_{\ov 2} D_1 R_{12}, \quad \Leftrightarrow \quad R_{12} D_{1} D_{\un 2}=D_{1} D_{\un 2} R_{12},
\]
where $D_{\un 2}=R_{12}^{-1} D_1R_{12}$.

Consider now the operator $\tQ$ acting in the vector space\footnote{Here the matrix elements $\partial_i^{\,j}$ of $D$ are considered
to be the \emph{free} generators that are not subject to any permutation relations.} $span(D_{\ov 2}D_1)$ as follows:
\[
\tQ(D_{\ov 2}\, D_1)=R_{12}(D_{\ov 2} D_1) R_{12}^{-1}.
\]
Note that this operator slightly differs from the operator $Q$ defined in \cite{GPS1}, section 5.

If $R$ is a Hecke symmetry, then the operator $\tQ$ obeys the third degree identity:
\[
(\tQ+q^2\, I)(\tQ+q^{-2}\, I)(\tQ-I)=0.
\]
Thus, the operator $\tQ$ has three eigenvalues $1$, $-q^2$ and $-q^{-2}$. By means of this operator we
construct the idempotents $\SSS$ and $\AAA$ (see \cite{GPS1} for detail):
\[
\mathcal{S} = \frac{1}{(q+q^{-1})^2}\left( (q^2+q^{-2})I +\tQ+\tQ^{-1}\right), \quad
\mathcal{A} = \frac{1}{(q+q^{-1})^2}\left( 2I - \tQ - \tQ^{-1}\right).
\]
Using the above identity on $\tQ$ one can prove that $\mathcal{S}$ and $\mathcal{A}$ are indeed idempotents and, moreover, they
are complementary in the usual sense:
\[
\mathcal{S}^2 = \mathcal{S},\qquad \mathcal{A}^2 = \mathcal{A}, \qquad
{\SSS\, \AAA=\AAA\,\SSS}=0,\qquad {\SSS+\AAA}=I.
\]

\begin{lemma}\label{lem:8}
	
\begin{enumerate}
\item The defining relations \eqref{q-Weyl} of the algebra $\DD(R)$ can be written in the following equivalent form:
\[
R_{12}^{-1} D_1 R_{12}^{-1}  D_1 = D_1 R_{12}^{-1} D_1 R_{12}^{-1}\quad \Leftrightarrow\quad
D_{\ov 2} D_1= {\SSS}(D_{\ov 2} D_1).
\]
\item
The defining relations \eqref{skew} of the algebra $\Dif$ can be written in the following equivalent form:
\[
R_{12}\DDDD_1R_{12} \DDDD_{1}=-\DDDD_1R_{12} \DDDD_{1} R_{12}^{-1} \quad \Leftrightarrow\quad
\DDDD_1 \DDDD_{\ov 2}={\AAA}(\DDDD_1 \DDDD_{\ov 2}).
\]
\end{enumerate}
\end{lemma}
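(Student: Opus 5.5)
The plan is to rewrite both sets of defining relations as spectral conditions on the operator $\tQ$ and then use that $\SSS$ and $\AAA$ are the spectral idempotents of $\tQ$. From the identity $(\tQ+q^2 I)(\tQ+q^{-2}I)(\tQ-I)=0$ and the explicit formulas, for generic $q$ (so that $1,-q^2,-q^{-2}$ are distinct) the operator $\tQ$ is diagonalizable. On an eigenvector with eigenvalue $1$ we get $\SSS=\frac{(q^2+q^{-2})+2}{(q+q^{-1})^2}=1$ and $\AAA=0$. On eigenvalues $-q^{\pm2}$ we get $\SSS=\frac{q^2+q^{-2}-q^2-q^{-2}}{(q+q^{-1})^2}=0$ and $\AAA=1$. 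Hence $\SSS$ projects onto $\ker(\tQ-I)$ and $\AAA$ onto $\ker(\tQ+q^2I)(\tQ+q^{-2}I)$. So $X=\SSS(X)$ iff $\tQ X=X$, and $Y=\AAA(Y)$ iff $(\tQ+q^2)(\tQ+q^{-2})Y=0$.

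\emph{Part 1.} Put $X=D_{\ov 2}D_1=R_{12}D_1R_{12}^{-1}D_1$. Then
\[
\tQ X = R_{12}XR_{12}^{-1}=R_{12}^2\,\bigl(D_1R_{12}^{-1}D_1R_{12}^{-1}\bigr).
\]
If the relation $R^{-1}_{12}D_1R^{-1}_{12}D_1=D_1R^{-1}_{12}D_1R^{-1}_{12}$ holds, substituting it gives $\tQ X=R_{12}D_1R_{12}^{-1}D_1=X$. Conversely, $\tQ X=X$ gives back the relation after multiplying on the left by $R_{12}^{-2}$, which is allowed since $R$ is invertible. Combined with the spectral remark, this proves the first equivalence. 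One should also note that $X$ lies in the space on which $\tQ$ acts, with the $\pa_i^j$ treated as free generators, so the spectral decomposition applies to $X$ before any relations are imposed.

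\emph{Part 2.} Put $Y=\DDDD_1\DDDD_{\ov 2}=\DDDD_1R_{12}\DDDD_1R_{12}^{-1}$. The relation \eqref{skew} reads $R_{12}\DDDD_1R_{12}\DDDD_1=-Y$, i.e.
\[
R_{12}\,Y\,R_{12}=-Y.
\]
I then need to show that, for elements of the relevant span, this is equivalent to $(\tQ+q^2)(\tQ+q^{-2})Y=0$, i.e. $\tQ Y+\tQ^{-1}Y=-(q^2+q^{-2})Y$. Here $\tQ Y=RYR^{-1}$ and $\tQ^{-1}Y=R^{-1}YR$. If $RYR=-Y$, these become $-YR^{-2}$ and $-R^{-2}Y$ respectively. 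Using the Hecke condition $R-R^{-1}=(q-q^{-1})I$, I would expand $R^{-2}=I-(q-q^{-1})R^{-1}$ and reduce to the requirement $YR^{-1}+R^{-1}Y=-(q+q^{-1})\ldots$. I expect this to need a second application of $RYR=-Y$, in the form $R^{-1}Y=-YR$, so that everything collapses to a multiple of $Y$ via the quadratic relation for $R$.

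For the converse direction I would use the eigen-decomposition of $Y$ under $\tQ$ and show that on the $1$-eigenspace the sandwich map $Y\mapsto RYR$ acts by a value different from $-1$. This identifies the kernel of $Y\mapsto RYR+Y$ with the $\AAA$-image.

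The main obstacle is this Part 2 bookkeeping. The relation \eqref{skew} is a ``sandwich'' condition $RYR=-Y$, not a conjugation condition, so it does not translate into an eigenvalue statement for $\tQ$ directly. The Hecke quadratic relation has to be used carefully to relate the two operators $Y\mapsto RYR$ and $Y\mapsto RYR^{-1}$ on the space spanned by the entries of $\DDDD_1\DDDD_{\ov 2}$. If the direct manipulation stalls, I would instead imitate the treatment of the operator $Q$ in \cite{GPS1}, section 5: write $\DDDD_1\DDDD_{\ov 2}$ in the form $\DDDD_{\ov 2}'\DDDD_1'$ via the relation between $D_{\ov 2}D_1$ and $D_1D_{\un 2}$ given above, and reduce Part 2 to a computation parallel to Part 1, with eigenvalue $1$ replaced by $-q^{\pm2}$.
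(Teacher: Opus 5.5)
\textbf{Part 1 and the forward half of Part 2: correct.} The paper states this lemma without proof. The rewriting $R_{12}D_{\ov 2}D_1=D_{\ov 2}D_1R_{12}$ recorded just before it is exactly your Part 1 computation. Here ``equivalent'' should be read as: the entries of the two matrix relations span the same subspace of degree-two elements of the free algebra, and your eigenspace argument does show this. The forward half of Part 2 also works as you anticipated. From $RYR=-Y$ you get
\[
RYR^{-1}+R^{-1}YR=-(YR^{-2}+R^{-2}Y)=-2Y+(q-\qq)(YR^{-1}+R^{-1}Y).
\]
Using $R^{-1}Y=-YR$ gives $YR^{-1}+R^{-1}Y=-Y(R-R^{-1})=-(q-\qq)Y$ (not $-(q+\qq)\cdots$), so the sum is $-(q^2+q^{-2})Y$, i.e.\ $\SSS(Y)=0$. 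You should state explicitly that in Part 2, $\tQ$ means $Y\mapsto R_{12}YR_{12}^{-1}$ on the basis formed by the entries of $\DDDD_1\DDDD_{\ov 2}$; this is how the paper uses it in the proof of $\dd^2=0$. In general (for $q\neq 1$) this is not the operator obtained from the definition on $\mathrm{span}(\DDDD_{\ov 2}\DDDD_1)$, so its cubic identity must be checked on this span rather than quoted.

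\textbf{The gap: the converse of Part 2.} Write $\sigma(Y)=RYR$. Your plan is to show that $\sigma$ has no eigenvalue $-1$ on the $1$-eigenspace of $\tQ$. That only gives $\ker(\sigma+1)\subseteq\mathrm{Im}\,\AAA$, which is the same inclusion the forward computation already provides. The converse needs the opposite fact: $\sigma=-1$ on all of $\mathrm{Im}\,\AAA$, i.e.\ the identity $R\,\AAA(Y)\,R=-\AAA(Y)$ in the free algebra. Nothing in your proposal produces it. It follows from $R^2=1+(q-\qq)R$ and $R^{-1}=R-(q-\qq)$:
\[
(q+\qq)^2\,R\,\AAA(Y)\,R=2RYR-R^2Y-YR^2=2RYR-2Y-(q-\qq)(RY+YR)=-(q+\qq)^2\,\AAA(Y).
\]
Hence $Y+RYR=\SSS(Y)+R\,\SSS(Y)\,R$ identically, so imposing $\SSS(Y)=0$ forces $RYR=-Y$.

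A cleaner route gives both directions and the cubic identity for this $\tQ$ at once. The maps $\ell\colon Y\mapsto RY$ and $r\colon Y\mapsto YR$ are commuting operators on the degree-two span. They are well defined because $R$ is skew-invertible, so the entries of $Y$ form a basis. Each is annihilated by $(x-q)(x+\qq)$. Then $\tQ=\ell r^{-1}$ and $\sigma=\ell r$ have joint eigenvalues $(1,q^2)$, $(1,q^{-2})$, $(-q^2,-1)$, $(-q^{-2},-1)$. It follows that $\mathrm{Im}(\sigma+1)=\ker(\tQ-1)=\mathrm{Im}\,\SSS$ for $q^2\neq-1$, which is exactly the claimed equivalence of relations.
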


Lemma \ref{lem:8} enables us  to prove the basic property of the de Rham operator.

\begin{proposition}
The de Rham operator \eqref{ddR} satisfies $\dd^2=0$.
\end{proposition}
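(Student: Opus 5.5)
Our plan is to mimic the proof of $\dd^2=0$ for $U(gl(N)_h)$ given in Section~\ref{sec:differential}: there the quadratic form in the differentials was skew-symmetric and the quadratic form in the derivatives was symmetric, so their contraction vanished. Here symmetric and skew-symmetric are replaced by the images of the idempotents $\SSS$ and $\AAA$ from Lemma~\ref{lem:8}.

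As before, we place $\dd$ between an element of $\Dif$ on the left and an element of $\MM(R)$ on the right, and apply the QPD to the right factor. Applying \eqref{ddR} twice gives, for $\om\ot a$,
\[
\dd^2(\om\ot a)=\om\sum C_i^j C_k^l\, d_j^{\,s} d_l^{\,p}\ot \pa_p^{\,k}\pa_s^{\,i}(a).
\]
Only products of two derivatives acting on $a$ appear, so the action of $\DD(R)$ on $\MM(R)$ is used only as a representation and no Leibniz rule is needed. In matrix form the expression is the contraction
\[
\Tr_{R(1,2)}\big(\DDDD_1\DDDD_{\ov 2}\,\cdot\, D_{\ov 2}D_1\big)
\]
in two copies of the space, with the ordering of $D$'s fixed so that the composition $\pa\pa$ corresponds to $D_{\ov 2}D_1$. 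This identification is a bookkeeping step: we rewrite $C_1C_2$ using $R_{12}C_1C_2=C_1C_2R_{12}$, which holds because $C$ is defined via $\Psi$ and $R$ commutes with $C\ot C$, so that the two $R$-traces combine into $\Tr_{R(1,2)}$ of the product of the two-index tensors $\DDDD_1\DDDD_{\ov 2}$ and $D_{\ov 2}D_1$ with conjugations by $R_{12}$ absorbed.

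The key step is then a duality statement. Under the pairing $(X,Y)\mapsto \Tr_{R(1,2)}(XY)$, the operator $\tQ$ (conjugation by $R_{12}$) is self-adjoint, since $\Tr_{R(1,2)}(R_{12}XR_{12}^{-1}Y)=\Tr_{R(1,2)}(XR_{12}^{-1}YR_{12})$ by cyclicity and the $R$-invariance of $C_1C_2$, and $\tQ^{-1}$ is self-adjoint likewise. Hence $\SSS$ and $\AAA$, being polynomials in $\tQ,\tQ^{-1}$, are self-adjoint. By Lemma~\ref{lem:8}, $\DDDD_1\DDDD_{\ov 2}=\AAA(\DDDD_1\DDDD_{\ov 2})$ and $D_{\ov 2}D_1=\SSS(D_{\ov 2}D_1)$, so the pairing equals
\[
\langle\AAA X,\SSS Y\rangle=\langle X,\AAA\SSS Y\rangle=0 .
\]
The relations of $\DD(R)$ hold as operators on $\MM(R)$ because the action is a representation, which is what makes applying part~1 of Lemma~\ref{lem:8} legitimate.

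The main obstacle is the index matching in the identification step. One must check that the second application of $\dd$ produces exactly $D_{\ov 2}D_1$, not $D_1D_{\un 2}$ or a transposed variant, paired with exactly $\DDDD_1\DDDD_{\ov 2}$, and that the $C$-factors form a genuine $\Tr_{R(1,2)}$ rather than a twisted version. We would settle this first by writing $\dd^2$ with explicit indices and comparing with $\Tr(C_1C_2R_{12}\cdots R_{12}^{-1})$. If a mismatch appears, we would use the equivalent form $R_{12}D_1D_{\un 2}=D_1D_{\un 2}R_{12}$ recorded before Lemma~\ref{lem:8} to switch to the appropriate presentation.
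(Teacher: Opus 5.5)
This is essentially the paper's argument. The paper substitutes $X_{12}=\AAA(X_{12})$ for $X_{12}=\DDDD_1\DDDD_{\ov 2}$, moves the conjugations onto $Y_{12}=D_{\ov 2}D_1$ by cyclicity of $\Tr_{R(12)}$, and finishes with $R^{\pm1}Y_{12}R^{\mp1}=Y_{12}$; this is exactly your $\langle\AAA X,Y\rangle=\langle X,\AAA Y\rangle$ combined with $\AAA\SSS=0$.

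\textbf{Correction (self-adjointness).} Your identity $\Tr_{R(12)}(R_{12}XR_{12}^{-1}Y)=\Tr_{R(12)}(XR_{12}^{-1}YR_{12})$ shows that the adjoint of $\tQ$ is $\tQ^{-1}$, not that $\tQ$ is self-adjoint. So $\SSS$ and $\AAA$ are self-adjoint because they are symmetric under $\tQ\leftrightarrow\tQ^{-1}$, not merely because they are polynomials in $\tQ^{\pm1}$.

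\textbf{Correction (identification step).} The paper simply asserts $\dd^2=\Tr_{R(12)}(\DDDD_1\DDDD_{\ov 2}D_{\ov 2}D_1)$. The tool that actually justifies it is $\Tr_{R(2)}R_{12}Z_1R_{12}^{-1}=I_1\Tr_R Z$; the commutation $[R_{12},C_1C_2]=0$ alone is not enough.
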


\begin{proof}
By definition of the multiple action of $\dd$, we have:
\[
\dd^2= \Tr_{R(12)}\, (\DDDD_1\, \DDDD_{\ov 2}\, D_{\ov 2}\, D_1) = \Tr_{R(12)}(X_{12}Y_{12}),
\]
where we introduced the notation $X_{12}=\DDDD_1\, \DDDD_{\ov 2}$ , $Y_{12}=D_{\ov 2}\, D_1$
to simplify writing.

Then, due to Lemma \ref{lem:8} we get
\[X_{12} =  {\AAA}(X_{12})=\frac{1}{(q+\qq)^2}\left(2 X_{12}-\tQ(X_{12})-\tQ^{-1}(X_{12})\right).
\]
On substituting this in $\dd^2$ we come to the result:
\begin{align*}
	\Tr_{R(12)}\, \DDDD_1\, \DDDD_{\ov 2}\, D_{\ov 2}\, D_1
	&=\Tr_{R(12)}\,X_{12}\, Y_{12} \\
	&=\frac{1}{(q+\qq)^2}
	\Tr_{R(12)}\,(2\,X_{12}-R\, X_{12}\, R^{-1}-R^{-1}\,X_{12}\, R)\, Y_{12} \\
	&=\frac{1}{(q+\qq)^2}
	\Tr_{R(12)}\,(2\,X_{12}\, Y_{12}- X_{12}\, R^{-1}\, Y_{12}\, R
	-X_{12}\, R\, Y_{12}\, R^{-1}).
\end{align*}
Here, in order to pass to the third line we employed the cyclic property of the $R$-trace. Now, on taking into account that in the
algebra $\DD(R)$ the following holds true
\[
R^{\pm 1}\, Y_{12}\, R^{\mp 1}= Y_{12},
\]
we conclude that the above expression for $\dd^2$ vanishes.
\end{proof}

Now, we compute the action of the de Rham operator $\dd$ on the power sums $p_k(M)=\Tr_R M^k$ for $k=1$ and $k=2$.
For $k=1$ the result follows immediately from \eqref{diff}:
\[
\dd p_1 =q^{-2m}\Tr_R\DDDD.
\]
For $k=2$ we obtain:
\begin{align*}
D_1\triangleright p_2
&= \Tr_{R(23)}\!\left(D_1\triangleright
   (M_{\ov 2}M_{\ov 3}R_{23})\right)\\
&=2q^{-2m}M_1-q^{-2m}(q-\qq)(\Tr_R M)I_1
  +h q^{-3m}m_q I_1.
\end{align*}
Here, we used the definition of the second power sum, formula \eqref{secdeg}, and the following properties  of $R$-matrix calculus
(see \cite{GPS1}):
\[
\Tr_{R(2)} R_{12}^{-1} = q^{-2m} I_1,\qquad  \Tr_R I=q^{-m} m_q.
\]
Consequently, we have
\[\dd p_2 =2q^{-2m}\Tr_{R} (\DDDD M)-q^{-2m} (q-\qq) (\Tr_R \DDDD)\, (\Tr_R M)+h\, q^{-3m} m_q \Tr_{R} \DDDD.
\]

As for the Chern differential forms of projective modules corresponding to idempotents $P_i$, they are defined by the formula
\begin{equation}
\operatorname{Ch}^R_k(P_i)=\Tr_R\,(P_i\,\dd P_i\,\dd P_i)^k.
\label{Chern}
\end{equation}
As usual, in all constructions related to the RE algebras the classical trace is replaced by its $R$-analog.

Let us consider an example related to the well-known Drinfeld-Jimbo Hecke symmetry in the case $N=2$. In a  basis
the symmetry is given by the following matrix:
\begin{equation}
\left(\begin{array}{lcll}
	q&0&0&0\\
	0&\,\,q-\qq&1&0\\
	0&1&0&0\\
	0&0&0&q
	\end{array} \right).
\end{equation}
In this case  $m=N=2$.

Consider the  modified RE algebra corresponding to this Hecke symmetry. The generators of the algebra are matrix
elements of the generating matrix $M$:
\begin{equation}
M = \left(\begin{array}{ll}
	m_1^1&m_1^2\\
	m_2^1& m_2^2 \end{array} \right)=\left(\begin{array}{ll}
	a&b\\
	c&d
\end{array} \right).
\label{matt}
\end{equation}
The permutation relations \eqref{q-Weyl} with the above Drinfeld-Jimbo symmetry $R$ give rise to the following explicit formulae:
\[
\begin{array}{lcl}
q ab-q^{-1}\, ba=hb &\qquad& q (bc-cb)=((q-\qq)\,a-h)(d-a)\\
\rule{0pt}{5mm}
q ca-q^{-1}ac=h c&& q (db-bd)=((q-\qq)\, a -h)\, b\\
\rule{0pt}{5mm}
ad-da=0&&q(cd-dc)=c((q-\qq)\, a-h).
\end{array}
\]
The RE algebra generated by the elements $a$, $b$, $c$ and $d$ will be denoted $\Aqh$.

The generating matrix $M$ obeys the Cayley-Hamiltion identity:
\[
M^2-\bigl(qp_1+q^{-1}h\bigr)M +\frac{q^2}{2_q} \left(\bigl(qp_1+q^{-1}h\bigr)p_1-p_2\right)I=0,
\]
where we denoted $2_q = q+q^{-1}$. The polynomials $p_1 = \Tr_R(M)$ and $p_2 = \Tr_R(M^2)$ are two first power sums.

Thus, it is natural to define the eigenvalues $\mu_1$ and $\mu_2$ of the matrix $M$ as solutions of the system of equations:
\begin{equation}
\mu_1+\mu_2=qp_1+q^{-1}h,\qquad \mu_1\mu_2 = \frac{q^2}{2_q}
\left(\bigl(qp_1+q^{-1}h\bigr)p_1-p_2\right).
\label{def-mu}
\end{equation}
The eigenvalues $\mu_i$ are central elements of the extended algebra $\Aqh[\mu_1,\mu_2]$.

The QPD $\pa_i^{\,j}$ are defined in the algebra $\Aqh$ according to the permutation relations exhibited in \eqref{q-Weyl}. 
However, we do not know any rule for their actions in the extended algebra. 
Nevertheless, we can proceed further if we assume the eigenvalues $\mu_i$ to be constant.
Otherwise stated, we consider the following quotient of the algebra $\Aqh$:
\[
{\Aqh}/\langle \Tr_R M-\al_1,\,\Tr_R M^2-\al_2\rangle,
\]
where $\al_1$ and $\al_2$ depend on $\mu_1$ and $\mu_2$ in accordance with \eqref{def-mu}:
\[
\al_1=q^{-1}(\mu_1+\mu_2)-q^{-2}h,
\qquad
\al_2=(\mu_1+\mu_2)\al_1-q^{-2}2_q\,\mu_1\mu_2.
\]

This quotient is a two-parameter quantum hyperboloid. The corresponding differential algebra is generated by the elements
$d_i^j\ot u$, $u\in \Aqh$ modulo the differential ideal generated by the elements $\dd p_1=\dd(\Tr_R M)$ and
$\dd p_2 = \dd(\Tr_R M^2)$. Recall that the generators $d_i^j$ are skew-commutative in the sense of the relation \eqref{skew}.

As for the first Chern form in this example, it is defined by formula \eqref{Chern} with $k=1$ modulo the mentioned ideal. In this
computation it is necessary to take into consideration that, in the quotient just described, the central spectral parameters are fixed.
Since
\[
P_1=\frac{M-\mu_2 I}{\mu_1-\mu_2},
\]
then
\[
\dd P_1=\frac{\dd M}{\mu_1-\mu_2}
\]
and the first Chern differential form can equivalently be written as
\[
\Ch^R_1(P_1)=\Tr_R(P_1\,\dd P_1\,\dd P_1)
=\frac{1}{(\mu_1-\mu_2)^3}
\Tr_R\,\bigl((M-\mu_2 I)\,\dd M\,\dd M\bigr).
\]

Let us precise that the matrix $\dd M\,\dd M$ should be treated as follows. Each entry of this matrix is a product of
skew-commutative factors. Thus, we can apply the skew-symmetrizer $\AAA$  to each entry. Also, by employing the
permutation relation
\[
R_{12}M_1R_{12}\dd M_1=\dd M_1 R_{12}M_1R_{12},
\]
which is equivalent to \eqref{perm1}, we can present the Chern class $\Ch^R_1(P_1)$ in a canonical form, that is
draw all differentials to the leftmost position.

It should be emphasized that the both operations commute with the action of $U_q(sl(N))$ provided the Hecke symmetry comes
from this quantum group. In general, this condition should be expressed in terms of the coaction of the RTT algebra.
In a similar manner we can define the higher Chern classes.

Thus, under assumption that $R$ stems from $U_q(sl(N))$, we get the Chern classes depending on two parameters $q$ and $h$.
Their specializations may be summarized by the following commutative diagram of the Chern forms:
\[
\begin{array}{ccc}
	\operatorname{Ch}^R_1(P_1;h,q) & \xrightarrow{\quad q\to1\quad} &
	\operatorname{Ch}_1(P_1;h,1) \\
	\bigg\downarrow {\scriptstyle h\to 0} && \bigg\downarrow {\scriptstyle h\to 0} \\
	\operatorname{Ch}^R_1(P_1;0,q) & \xrightarrow{\quad q\to 1\quad} &
		\operatorname{Ch}^R_1(P_1;0,1)
\end{array}
\]
and similarly for other Chern classes.

The last element in this commutative diagram is a non-compact version of the classical form exhibited at the end of Section 5.
Here we are dealing with non-compact version, since in the RE algebra the compact form (the so-called, quantum sphere) is a
not well-defined object. The point is that the quantum sphere should correspond to the conjugation operator, which  in turn
should correspond to a pairing in the basic space $V$. In the two-dimensional case $V=span(x_1,x_2)$ (we consider the basis
in which  the Hecke symmetry $R$ is represented by the above matrix), the only categorical morphism (up to a numerical factor)
is a $q$-deformation of that
\[
\langle x_1,x_1 \rangle =\langle x_2,x_2 \rangle=0,\quad \langle x_1, x_2  \rangle=1,\quad \langle x_2,\, x_1  \rangle=-1.
\]
There is no categorical morphism which would be a deformation of the standard Euclidean or Hermitian form.

Note that by a categorical  morphism  we mean a map of an object of the monoidal quasitensor category generated by
the space $V$ to another object  which commutes with braidings in this category. Let us observe that for introducing
a conjugation in the RE algebra, which is nothing but the enveloping algebra of the space $\End(V)$, we need neither
conjugation in the quantum group nor that in its dual algebra.

\begin{remark}
The quantum hyperboloid is a particular example of the so-called quantum varieties. The best and simplest way to
	construct them consists in quotienting a given  RE algebra (modified or not) over the ideal generated by some
	combinations  of the central elements  $\Tr_R\, M^k$. In particular, in this way it is possible to construct quantum
	(or braided) analogs of semisimple orbits that is, orbits of diagonal matrices in $gl(N)^*$ corresponding to coadjoint
	action of $GL(N)$. The semi-classical counterparts of such quantum orbits can be obtained in two ways.
	
	The first way consists in reducing the Sklyanin bracket from the corresponding group $GL(N)$.  As shown in \cite{KRR}
	the reduced bracket (often called the Poisson-Bruhat structure) is compatible with the Kirillov-Kostant-Souriau bracket
	iff such an orbit is symmetric, i.e. the matrix in question has only two distinct eigenvalues.
	
	The second way consists in restricting the Poisson bracket $\{\,,\,\}_{RE}$
	corresponding to the RE algebra to the orbit of any diagonal matrix. Moreover, the restricted bracket is compatible with
	the Kirillov-Kostant-Souriau one. This follows from the fact that the  bracket $\{\,,\,\}_{RE}$  is compatible with the linear $gl(N)$
	Poisson-Lie bracket. All these properties can be deduced from the paper \cite{Don}.
	\end{remark}

In conclusion, let us briefly summarize. We developed some elements of differential  geometry on a specific family of NC algebras,
possessing very remarkable properties (see Introduction). In dealing with RE algebras, we  use the so-called $R$-traces which are
well-adapted to the structures of these algebras. In this connection we would like to put the following problem: to describe a more
general class of associative algebras which require using generalized traces different from the usual ones or super-traces.

\medskip

\noindent
{\bf Acknowlegments.} V.~Roubtsov is grateful to the Institut des Hautes \'Etudes Scientifiques (IHES) for its hospitality and excellent 
working conditions.

\medskip

\noindent
{\bf Funding.} The work of P.~Saponov was carried out within the framework of the Basic Research Program at HSE University
(HSE-BR-2025-84).

\end{document}